\newtheorem{theorem}{Theorem}
\newtheorem{remark}{Remark}
\DeclareMathOperator*{\erf}{erf}
\DeclareMathOperator*{\esssup}{ess\,sup}
\newcommand{\differential}{{\rm{d}}}
\newcommand{\Phicc}{\ensuremath{\Phi_{\rm{cc}}}}
\newcommand{\Phice}{\ensuremath{\Phi_{\rm{ce}}}}
\newcommand{\phicc}{\ensuremath{\phi_{\rm{cc}}}}
\newcommand{\phice}{\ensuremath{\phi_{\rm{ce}}}}
\newcommand{\Ccc}{\ensuremath{C_{\rm{cc}}}}
\newcommand{\Cce}{\ensuremath{C_{\rm{ce}}}}
\title{\LARGE\textbf{
A Controlled Mean Field Model for Chiplet Population Dynamics}
}
\author{Iman Nodozi, Abhishek Halder, Ion Matei% <-this % stops a space
\thanks{Iman Nodozi is with the Department of Electrical and Computer Engineering, University of California, Santa Cruz, CA 95064, USA,
        {\tt\small{inodozi@ucsc.edu}}.\\
        Abhishek Halder is with the Department of Applied Mathematics, University of California, Santa Cruz, CA 95064, USA,
        {\tt\small{ahalder@ucsc.edu}}.\\
        Ion Matei is with the Palo Alto Research Center, Inc., Palo Alto, CA 94304, USA,
        {\tt\small{imatei@parc.com}}.\\
        This work was partially supported by NSF grant 2112755.
}}
\begin{document}

\maketitle
\pagenumbering{arabic}

\bstctlcite{IEEEexample:BSTcontrol} % suppress consecutive same authored refs being dashed

%%%%%%%%%%%%%%%%%%%%%%%%%%%%%%%%%%%%%%%%%%%%%%%%%%%%%%%%%%%%%%%%%%%%%%%%%%%%%%%%
\begin{abstract}
In micro-assembly applications, ensemble of chiplets immersed in a dielectric fluid are steered using dielectrophoretic forces induced by an array of electrode population. Generalizing the finite population deterministic models proposed in prior works for individual chiplet position dynamics, we derive a controlled mean field model for a continuum of chiplet population in the form of a nonlocal, nonlinear partial differential equation. The proposed model accounts for the stochastic forces as well as two different types of nonlocal interactions, viz. chiplet-to-chiplet and chiplet-to-electrode interactions. Both of these interactions are nonlinear functions of the electrode voltage input. We prove that the deduced mean field evolution can be expressed as the Wasserstein gradient flow of a Lyapunov-like energy functional. With respect to this functional, the resulting dynamics is a gradient descent on the manifold of joint population density functions with finite second moments that are supported on the position coordinates.
\end{abstract}

%%%%%%%%%%%%%%%%%%%%%%%%%%%%%%%%%%%%%%%%%%%%%%%%%%%%%%%%%%%%%%%%%%%%%%%%%%%%%%%%

\section{Introduction}\label{sec:introduction}
This work is motivated by micro-assembly applications, such as printer systems \cite{matei2017towards,matei2019micro} and manufacturing of photovoltaic solar cells, where an array of electrodes can be used to generate spatio-temporally non-homogeneous electric potential landscapes for dynamically assembling the ``chiplets"--micron sized particles immersed in dielectric fluid--into desired patterns. In such applications, the electric potentials generated by the array of electrodes induce non-uniform dielectrophoretic forces on the chiplets, thereby resulting in a population-level chiplet dynamics. The purpose of the present work is to propose a controlled mean field model for the same.

There have been several works \cite{lu2014open,edwards2014controlling,matei2020micro,matei2021micro,lefevre2022closed} on the modeling and dielectrophoretic control of chiplet population. However, a continuum limit macroscopic dynamics that accounts for both chiplet-to-chiplet and chiplet-to-electrode nonlocal interactions, as considered herein, has not appeared before.

The mean field limit pursued here involves considering the number of chiplets and electrodes as infinity, i.e., to think both of them as continuum population. There are two reasons why this could be of interest. \emph{First,} the continuum limit helps approximate and better understand the dynamics for large but finitely many chiplets and electrodes, which is indeed the situation in the engineering applications mentioned before. \emph{Second,} the distributed control synthesis problem for large but finite population becomes computationally intractable, as noted in recent works \cite{matei2021micro,matei20222d,matei2022system}. A controlled mean field model opens up the possibility of designing a controller in the continuum limit with optimality guarantees. Such a controller can then be applied to a large but finite population with sub-optimality bounds. We clarify here that in this work, we only present the mean field model and its properties. We leave the control synthesis problem for our follow up work.   

As in prior works such as \cite{matei2021micro}, we consider the chiplet dynamics in two dimensional position coordinate. Specifically, let $\bm{x}(t)\in\mathbb{R}^2$ denote the position vector of a chiplet at any fixed time $t\in[0,\infty)$, and let 
$$u: \mathbb{R}^2 \times [0,\infty) \mapsto [u_{\min}, u_{\max}]\subset\mathbb{R}$$ 
denote a causal deterministic control policy, i.e., $u=u(\bm{x},t)$. The control $u$ represents the electrode voltage input, and in practice, the typical voltage range $[u_{\min}, u_{\max}]=[-400, 400]$ Volt. We denote the collection of admissible control policies as $\mathcal{U}$. For a typical experimental set up detailing the sensing-control architecture, see \cite[Sec. II]{matei2021micro}.

A viscous drag force balances the controlled force vector field $\bm{f}^{u}$ induced by the joint effect of the chiplet-to-chiplet and chiplet-to-electrode interactions. At the low Reynolds number context relevant here, the viscous drag force is proportional to $\dot{\bm{x}}$, where the proportionality constant $\mu$ denotes the viscous coefficient of the dielectric fluid. Ignoring the acceleration due to negligible mass of a chiplet, the dynamics then takes a form
\begin{align}
\underbrace{\mu\dot{\bm{x}}}_{\text{viscous drag force}} = \underbrace{\bm{f}^{u}}_{\text{controlled interaction force}} + \quad \text{noise}
\label{ChipletDynamicsHighLevel}
\end{align}
where the noise may result from stochastic forcing due to environmental fluctuations (e.g., dielectric fluid impurities) and/or unmodeled dynamics.

\subsubsection*{Contributions} In this paper, we make the following two specific contributions.
\begin{itemize}
\item We derive a controlled mean field dynamics (Sec. \ref{sec:Model}) for the macroscopic motion of the chiplet population. The derived model is non-affine in control, and rather non-standard compared to the existing nonlocal dynamics models available in the literature. 

\item We establish that the derived mean field dynamics model can be understood as the Wasserstein gradient flow (Sec. \ref{sec:WassGradFlow}) of a free energy functional over the manifold of chiplet population density functions. 
\end{itemize}

% \subsubsection*{Organization} The remaining of this work is structured as follows. Sec. \ref{sec:prelim} introduces some notations and preliminaries to help the ensuing development. The details of the controlled mean field model are presented in Sec. \ref{sec:Model}. The Wasserstein gradient flow structure is explained in Sec. \ref{sec:WassGradFlow}. Some concluding remarks are provided in Sec. \ref{sec:conclusions}. 

\section{Notations and Preliminaries}
\label{sec:prelim}
\noindent{\textbf{Wasserstein distance.}} The Wasserstein distance $W$ between a pair of probability density functions $\rho_1(\bm{x}),\rho_2(\bm{y})$ (or between corresponding probability measures in general) with finite second moments, respectively supported on $\mathcal{X},\mathcal{Y}\subseteq\mathbb{R}^{d}$, is defined as
\begin{align}
W\!\left(\rho_1,\rho_2\right)\! :=\!\! \left(\underset{\rho\in\Pi_2\left(\rho_1,\rho_2\right)}{\inf}\!\!\int_{\mathcal{X}\times\mathcal{Y}}\!\!\!\|\bm{x}-\bm{y}\|_{2}^{2}\:\rho(\bm{x},\bm{y})\differential\bm{x}\differential\bm{y}\right)^{\frac{1}{2}}  
\label{defWass}    
\end{align}
where $\Pi_2\left(\rho_1,\rho_2\right)$ is the collection of all joint probability density functions $\rho(\bm{x},\bm{y})$ supported on the product space $\mathcal{X}\times\mathcal{Y}$ having finite second moments, $\bm{x}$ marginal $\rho_1$, and $\bm{y}$ marginal $\rho_2$. As such, \eqref{defWass} involves an infinite dimensional linear program that goes back to the work of Kantorovich \cite{kantorovich1942translocation}. It is well-known \cite[p. 208]{villani2003topics} that $W$ is a metric on the space of probability density functions (more generally, on the space of probability measures). Under mild assumptions, the minimizing measure $\rho^{\rm{opt}}(\bm{x},\bm{y})\differential\bm{x}\differential\bm{y}$ is supported on the graph of the optimal transport map $T^{\rm{opt}}:\mathcal{X}\mapsto\mathcal{Y}$ pushing the measure $\rho_1(\bm{x})\differential\bm{x}$ forward to $\rho_2(\bm{y})\differential\bm{y}$. For many connections between the Wasserstein metric and theory of optimal mass transport, we refer the readers to \cite{villani2003topics,villani2009optimal}.

\noindent{\textbf{Wasserstein gradient of a functional.}}
Let $\mathcal{P}\left(\mathbb{R}^d\right)$ denote the space of all probability density functions supported over the subsets of $\mathbb{R}^{d}$, and denote the collection of probability density functions with finite second moments as $\mathcal{P}_2\left(\mathbb{R}^d\right)\subset \mathcal{P}\left(\mathbb{R}^d\right)$. The \emph{Wasserstein gradient} of a functional $\Phi:\mathcal{P}_2\left(\mathbb{R}^d\right) \mapsto \mathbb{R}$, denoted as $\nabla^{W}\Phi$, evaluated at $\rho\in\mathcal{P}_2\left(\mathbb{R}^d\right)$, is given by \cite[Ch. 8]{ambrosio2008gradient}
\begin{align}
    \nabla^{W}\Phi\left(\rho\right) := -\nabla \cdot\left(\rho \nabla \frac{\delta \Phi}{\delta \rho}\right)
    \label{equ:Wasserstein gradient}
\end{align}
where $\nabla$ denotes the standard Euclidean gradient, and $\frac{\delta}{\delta \rho}$ denotes the functional derivative w.r.t. $\rho$.

To exemplify the definition \eqref{equ:Wasserstein gradient}, consider the functional $\Phi(\rho)=\int\rho\log\rho$ (negative entropy) for $\rho\in\mathcal{P}_2\left(\mathbb{R}^d\right)$. Then $\frac{\delta\Phi}{\delta \rho}=1 + \log\rho$, $\nabla(1+\log\rho)=\nabla\rho/\rho$, and we get $\nabla^{W}\Phi\left(\rho\right)=-\nabla\cdot\nabla\rho = -\Delta \rho$, where $\Delta := \nabla\cdot\nabla$ denotes the Euclidean Laplacian operator.

\noindent{\textbf{Other notations.}} The notation $\langle\cdot,\cdot\rangle$ is used to denote either the standard Euclidean inner product of vectors, or the $L^2$ inner product of functions, as evident from the context. For any natural number $n$, we use the finite set notation $\llbracket n\rrbracket:=\{1,2,\hdots,n\}$. The symbols $\esssup$, $\mathbb{E}$, $\mathbb{P}$, $\bm{I}_2$, $\|\cdot\|_2$ and $\|\cdot\|_{\infty}$ denote the essential supremum, the expectation, the probability measure, the $2\times 2$ identity matrix, the vector 2 and $\infty$ norms, respectively. The symbol $\sim$ is used as a shorthand for ``follows the statistical distribution density".

Given probability measures $\mu_0,\mu_1$ on $\mathbb{R}^{d}$, the \emph{total variation distance}
${\rm{dist}}_{\rm{TV}}(\mu_0,\mu_1):=\frac12\sup_{f}\left|\int f\:{\rm d}(\mu_0 - \mu_1)\right|$
where the supremum is over all measurable $f:\mathbb{R}^{d}\rightarrow\mathbb{R}$, $\|f\|_{\infty}\leq 1$. For $f:\mathbb{R}^{d}\rightarrow\mathbb{R}$, we define its \emph{Lipschitz constant} $\|f\|_{\rm{Lip}}:=\sup_{\bm{x}\neq\bm{y}}\frac{|f(\bm{x})-f(\bm{y})|}{\|\bm{x}-\bm{y}\|_2}$, and its \emph{bounded Lipschitz constant} $\|f\|_{\rm{BL}}:=\max\{\|f\|_{\infty},\|f\|_{\rm{Lip}}\}$. The \emph{bounded Lipschitz distance} \cite[Ch. 11.3]{dudley2002real} between probability measures $\mu_0,\mu_1$ is ${\rm{dist}}_{\rm{BL}}(\mu_0,\mu_1):=\sup_{\|f\|_{\rm{BL}}\leq 1}\left|\int f\:{\rm d}(\mu_0 - \mu_1)\right|$. Notice that ${\rm{dist}}_{\rm{BL}}(\mu_0,\mu_1)\leq 2\:{\rm{dist}}_{\rm{TV}}(\mu_0,\mu_1)$.

For $\mathcal{X}\subseteq \mathbb{R}^{d}$, we use $C_b(\mathcal{X})$ to denote the space of all bounded continuous functions $\varphi:\mathcal{X}\mapsto\mathbb{R}$, and $C_b^k(\mathcal{X})$ comprises those which are also $k$ times continuously differentiable (in the sense of mixed partial derivatives of order $k$). We say that a function sequence $\{g_n\}_{n\in\mathbb{N}}$ where $g_n\in L^{1}(\mathcal{X})$, converges weakly to a function $g\in L^{1}(\mathcal{X})$, if 
$\lim_{n\rightarrow\infty}\int_{\mathcal{X}}\left(g_n - g\right)\psi = 0$ for all $\psi\in C_b(\mathcal{X})$. We symbolically denote the weak convergence as $g_n \rightharpoonup g$.

%%%%%%%%%%%%%%%%%%%%%%%%%%%%%%%%%%%%%%%%%%

\section{Controlled Mean Field Model}\label{sec:Model}

In this Section, we introduce the chiplet population dynamics. Such model has its origin in the physical processes enabling silicon microchips to be manipulated by both electrophoretic and dielectrophoretic forces when they are placed in dielectric carriers such as Isopar-M \cite{4891957}. These carriers have low conductivity which allows long-range Coulomb interactions. In general, the dielectrophoretic forces dominate, and they are induced by the potential energy generated by electrostatic potentials created in electrodes. The electrodes are formed by depositing nm-scale Molybdenum-Chromium (MoCr) onto a glass substrate via vapor deposition and then directly patterning them with a laser ablation tool. The electrodes are then insulated from the chiplets and dielectric fluid by thermally laminating a micrometer-scale thick perfluoroalkoxy (PFA) film. The dielectric forces act on the chiplets, while viscous drag forces proportional to their velocities oppose their motion. Due to the negligible mass of the chiplets, their acceleration can be ignored.

Let us denote the \emph{normalized chiplet population density function} (PDF) at time $t$ as $\rho(\bm{x},t)$. By definition, $\rho\geq 0$ and $\int_{\mathbb{R}^2}\rho\:\differential\bm{x} = 1$ for all $t$. 

We make the following assumptions.
\begin{itemize}
    \item[\textbf{A1}.] Under an admissible control policy $u\in\mathcal{U}$, the chiplet normalized population distribution over the two dimensional Euclidean configuration space remains absolutely continuous w.r.t. the Lebesgue measure $\differential\bm{x}$ for all $t\in[0,\infty)$. In other words, the corresponding PDFs $\rho(\bm{x},t)$ exist for all $t\in[0,\infty)$.
    
    \item[\textbf{A2}.] Under an admissible control policy $u\in\mathcal{U}$, we have $\rho \in \mathcal{P}_2(\mathbb{R}^2)$ for all $t$.
\end{itemize}

The sample path dynamics of a chiplet position is governed by a controlled nonlocal vector field $$\bm{f}^u:\mathbb{R}^2 \times [0,\infty) \times \mathcal{U} \times \mathcal{P}_2(\mathbb{R}^2)\mapsto\mathbb{R}^{2}$$ 
induced by a controlled \emph{interaction potential} $\phi^u:\mathbb{R}^2 \times \mathbb{R}^2\times[0,\infty) \mapsto \mathbb{R}$, i.e., 
\begin{align}
\bm{f}^u(\bm{x},t,u,\rho) := -\nabla\left(\rho * \phi^u\right),
\label{Controlledvectorfield}    
\end{align}
where $*$ denotes \emph{generalized  convolution} in the sense 
$$\left(\rho * \phi^u\right)(\bm{x},t) := \int_{\mathbb{R}^2}\phi^u(\bm{x},\bm{y},t)\rho(\bm{y},t)\differential\bm{y}.$$ 
The superscript $u$ in $\phi^{u}$ emphasizes that the potential depends on the choice of control policy.
In particular,
\begin{subequations}
\begin{align}
\!\!\!\!\phi^u(\bm{x},\bm{y},t) &:= \phicc^u(\bm{x},\bm{y},t) + \phice^u(\bm{x},\bm{y},t), \label{U}\\
\!\!\!\!\phicc^u(\bm{x},\bm{y},t) &:= \frac{1}{2}\Ccc\left(\|\bm{x}-\bm{y}\|_2\right)\left(\bar{u}(\bm{y},t) - \bar{u}(\bm{x},t)\right)^2,\label{Ucc}\\
\!\!\!\!\phice^u(\bm{x},\bm{y},t) &:= \frac{1}{2}\Cce\left(\|\bm{x}-\bm{y}\|_2\right) \left(u(\bm{y},t) - \bar{u}(\bm{x},t)\right)^2, \label{Uce}
\end{align}
\label{InteractionPotential}
\end{subequations}
for $\bm{x},\bm{y}\in\mathbb{R}^2$ and 
\begin{align}
 \bar{u}(\bm{x},t) :=\frac{\int_{\mathds{R}^2}C_{\rm{ce}}\left(\|\bm{x}-\bm{y}\|_2\right)u(\bm{y},t)\rho (\bm
{y}, t) \differential \bm{y}}{\int_{\mathds{R}^2}C_{\rm{ce}}\left(\|\bm{x}-\bm{y}\|_2\right) \rho (\bm
{y}, t)\differential \bm{y}}. 
\label{defubar}
\end{align}
The subscripts {\rm{cc}} and {\rm{ce}}  denote the chiplet-to-chiplet and chiplet-to-electrode interactions, respectively. As before, the superscript $u$ highlights the dependence on the choice of control policy. In \eqref{Ucc}-\eqref{Uce}, $\Ccc$ and $\Cce$ respectively denote the chiplet-to-chiplet and chiplet-to-electrode capacitances. These capacitances can be determined using two dimensional electrostatic COMSOL\textsuperscript{\textregistered} \cite{comsol} simulations for a symmetric chiplet geometry. Such simulation model comprises two metal plates with dimensions defined by the chiplet and electrode geometry, surrounded by a dielectric with properties identical to those of the Isopar-M solution.
%The electrostatic COMSOL model has several parameters, including the dimensions of the chiplet and electrode, the dielectric fluid constant, and the positions of the chiplet and electrode. We use electromagnetic simulations to compute the quasi-static models that solve partial differential equations, with a ground boundary (zero potential) as the boundary condition for the model's design. 
The capacitances are computed from the charges that result on each conductor when an electric potential is applied to one and the other is grounded. Once the capacitance among chiplets and electrodes at different distances are computed, differentiable parameterized capacitance function approximations (e.g., linear combination of error functions) can be fitted to that data. 
%We collect the entries of the capacitance matrix, which correspond to the capacitance among chiplets and electrodes at different distances. We choose capacitance functions as differentiable parameterized functions, such as a linear combination of error functions, and their parameters are learned through parameter fitting.

In words, \eqref{U} says that the total controlled interaction potential $\phi^{u}$ is a sum of the chiplet-to-chiplet interaction potential $\phicc^u$ given by \eqref{Ucc}, and the chiplet-to-electrode interaction potential $\phice^u$ given by \eqref{Uce}.

The expressions for \eqref{Ucc}, \eqref{Uce}, \eqref{defubar} arise from capacitive electrical circuit abstraction that lumps the interaction between the electrodes and the chiplets. In \cite[Sec. III]{matei2021micro}, such an abstraction was detailed for a finite population of $n$ chiplets and $m$ electrodes. The expressions \eqref{Ucc}, \eqref{Uce}, \eqref{defubar} generalize those in the limit $n,m\rightarrow\infty$. On the other hand, specializing \eqref{Ucc}, \eqref{Uce}, \eqref{defubar} for a finite population $\{\bm{x}_{i}\}_{i\in\llbracket n\rrbracket}$ with $\rho\equiv\frac{1}{n}\sum_{i=1}^{n}\delta_{\bm{x}_i}$ where $\delta_{\bm{x}_{i}}$ denotes the Dirac delta at $\bm{x}_{i}\in\mathbb{R}^2$, indeed recovers the development in \cite[Sec. III]{matei2021micro}.

\begin{remark}\label{RemarkPotentialSymmetry}
An immediate observation from \eqref{InteractionPotential} is that even though the potential $\phicc^u$ is symmetric in $\bm{x},\bm{y}$, the potential $\phice^u$ is not. Therefore, the overall controlled interaction potential $\phi^{u}$ is not symmetric in $\bm{x},\bm{y}$. 
\end{remark}

Without loss of generality, we assume unity viscous coefficient in \eqref{ChipletDynamicsHighLevel}, i.e., $\mu=1$ (since otherwise we can re-scale the $\bm{f}^u$). In addition, assuming the chiplet velocity is affected by additive standard Gaussian White noise, the sample path dynamics of the $i$th chiplet position $\bm{x}_{i}(t)$ then evolves as per 
%(SGWN), the sample path dynamics of $i$th chiplet position $\bm{x}_{i}(t)$ then evolves as per the Langevin equation 
% \begin{align}
% \dot{\bm{x}}_{i} = \bm{f}^u(\bm{x}_{i},t,u,\rho) + \sqrt{2\beta^{-1}}\times{\rm{SGWN}},  \quad i\in\llbracket n\rrbracket,
% \label{SamplePathDyn}    
% \end{align}
% where $\bm{f}^u$ is given by \eqref{Controlledvectorfield}, and $\beta>0$ denotes inverse temperature. Equivalently, \eqref{SamplePathDyn} can be written as 
a controlled interacting diffusion, i.e., as a It\^{o} stochastic differential equation (SDE) with \emph{nonlocal} nonlinear drift:
\begin{align}
\differential\bm{x}_{i} = \bm{f}^u(\bm{x}_{i},t,u,\rho)\:\differential t + \sqrt{2\beta^{-1}}\:\differential\bm{w}_{i}(t), \quad i\in\llbracket n\rrbracket,
\label{ItoSDE}    
\end{align}
where $\bm{f}^u$ is given by \eqref{Controlledvectorfield}, $\beta>0$ denotes inverse temperature, and $\bm{w}_{i}(t)\in\mathbb{R}^{2}$ denote i.i.d. realizations of a standard Wiener process that is $\mathcal{F}_t$-adapted on a complete filtered probability space %$(\Omega,\mathcal{F},\mathbb{P})$ 
with sigma-algebra $\mathcal{F}$ and associated filtration $\left(\mathcal{F}_t\right)_{t\geq 0}$. In particular, $\mathcal{F}_0$ contains all $\mathbb{P}$-null sets and $\mathcal{F}_t$ is right continuous.

The study of SDEs with nonlocal nonlinear drift originated in \cite{mckean1966class}, and has grown into a substantial literature, see e.g., \cite{sznitman1991topics,carmona2018probabilistic}. In statistical physics, such models are often referred to as ``propagation of chaos"--a terminology due to Kac \cite{kac1956foundations}. A novel aspect of the model \eqref{ItoSDE} w.r.t. the existing literature is that the interaction potential $\phi^{u}$ has a nonlinear dependence on the control policy $u(\bm{x},t)$ as evident from \eqref{InteractionPotential}.

\subsection{Existence-Uniqueness of Solution for \eqref{ItoSDE}}
For a given causal control policy $u\in\mathcal{U}$, it is known \cite[Thm. 2.4]{lacker} that an interacting diffusion of the form \eqref{ItoSDE} with initial condition $\bm{x}_{i0}\sim \rho_0$ admits unique weak solution provided the following four conditions hold:\\
(i) the drift $\bm{f}^{u}$ is jointly Borel measurable w.r.t. $\mathbb{R}^{2}\times[0,\infty)\times\mathcal{P}\left(\mathbb{R}^2\right)$,\\
(ii) the diffusion coefficient $\sqrt{2\beta^{-1}}\bm{I}_{2}$ is invertible, and the driftless SDE $\differential\bm{z}(t) = \sqrt{2\beta^{-1}}\differential\bm{w}(t)$ admits unique strong solution,\\
(iii) the drift $\bm{f}^{u}$ is uniformly bounded,\\
(iv) there exists $\kappa>0$ such that
\begin{align*}
&\|\bm{f}^u\left(\bm{x},t,u(\bm{x},t),\rho\right) - \bm{f}^u\left(\bm{x},t,u(\bm{x},t),\widetilde{\rho}\right)\|_2\\
&\quad\leq \kappa \;{\rm{dist}}_{\rm{TV}}\left(\rho,\widetilde{\rho}\right)\quad\text{uniformly in}\; (\bm{x},t)\in\mathbb{R}^{2}\times[0,\infty).
\end{align*}
We assume that the capacitances $\Ccc,\Cce$ in \eqref{InteractionPotential}-\eqref{defubar} are sufficiently smooth, and the control $u$ can be parameterized to ensure smoothness for guaranteeing that $\nabla_{\bm{x}}\phicc^u,\nabla_{\bm{x}}\phice^u$ (and thus $\nabla_{\bm{x}}\phi^u$) are $\|\cdot\|_2$ Lipschitz and uniformly bounded.

As $\nabla_{\bm{x}}\phi^u$ is bounded,  $\bm{f}^{u}=\smallint_{\mathbb{R}^2}\nabla_{\bm{x}}\phi^{u}(\bm{x},\bm{y},t)\rho(\bm{y})\differential\bm{y}$, which being an average of Lipschitz, is itself Lipschitz and thus continuous. Since $\boldsymbol{f}^u$ is continuous, the preimage of any Borel set in $\mathbb{R}^2$ under $\boldsymbol{f}^u$ is a measurable set in $\mathbb{R}^2 \times [0,\infty) \times \mathcal{U} \times \mathcal{P}_2(\mathbb{R}^2)$. Thus, condition (i) holds.

Condition (ii) holds for any $\beta > 0$ since $\bm{z}(t)$ is a Wiener process with variance $2\beta^{-1}$.

For (iii), we find $\underset{(\bm{x},t)\in\mathbb{R}^{2}\times [0,\infty)]}{\esssup}\|\bm{f}^u\left(\bm{x},t,u(\bm{x},t),\rho\right)\|_{\infty}$
\begin{align}
=& \esssup_{(\bm{x},t)\in\mathbb{R}^{2}\times [0,\infty)]}\|\smallint_{\mathbb{R}^2}\nabla_{\bm{x}}\phi^{u}(\bm{x},\bm{y},t)\rho(\bm{y})\differential\bm{y}\|_{\infty}\nonumber\\
\leq& \esssup_{(\bm{x},t)\in\mathbb{R}^{2}\times [0,\infty)]}\smallint_{\mathbb{R}^2}\|\nabla_{\bm{x}}\phi^{u}(\bm{x},\bm{y},t)\rho(\bm{y})\|_{\infty}\differential\bm{y}\nonumber\\
\leq& \smallint_{\mathbb{R}^2}\esssup_{(\bm{x},t)\in\mathbb{R}^{2}\times [0,\infty)]}\|\nabla_{\bm{x}}\phi^{u}(\bm{x},\bm{y},t)\rho(\bm{y})\|_{\infty}\differential\bm{y}\nonumber\\
=& \smallint_{\mathbb{R}^2}\esssup_{(\bm{x},t)\in\mathbb{R}^{2}\times [0,\infty)]}\|\nabla_{\bm{x}}\phi^{u}(\bm{x},\bm{y},t)\|_{\infty}\rho(\bm{y})\differential\bm{y}\label{ineq:esssup}
\end{align}
where we used the Leibniz rule, triangle inequality, and that $\rho\geq 0$. Per assumption, $\nabla_{\bm{x}}\phi^{u}$ is uniformly bounded, and we have: \eqref{ineq:esssup} $\leq M\smallint_{\mathbb{R}^2}\rho(\bm{y})\differential\bm{y}=M$ for some constant $M>0$.

Condition (iv) holds because 
\begin{align*}
&\|\bm{f}^u\left(\bm{x},t,u(\bm{x},t),\rho\right) - \bm{f}^u\left(\bm{x},t,u(\bm{x},t),\widetilde{\rho}\right)\|_2\\
=& \|\nabla_{\bm{x}}\smallint 
_{\mathbb{R}^2}\phi^{u}(\bm{x},\bm{y},t)(\rho(\bm{y})-\widetilde{\rho}(\bm{y}))\differential\bm{y}\|_2\\
=& \|\smallint 
_{\mathbb{R}^2}\left(\nabla_{\bm{x}}\phi^{u}(\bm{x},\bm{y},t)\right)(\rho(\bm{y})-\widetilde{\rho}(\bm{y}))\differential\bm{y}\|_2\\
\leq& c\: {\rm{dist}}_{\rm{BL}}(\rho,\widetilde{\rho})\leq \kappa\: {\rm{dist}}_{\rm{TV}}(\rho,\widetilde{\rho})\quad\forall(\bm{x},t)\in\mathbb{R}^{2}\times[0,\infty)
\end{align*} 
for some constant $c>0$, $\kappa := 2c$, and the second to last inequality follows from $\nabla_{\bm{x}}\phi^{u}$ being bounded and Lipschitz.

Thus, we can guarantee the existence-uniqueness of sample path $\bm{x}_i(t)$, $i\in\llbracket n\rrbracket$, solving the interacting diffusion \eqref{ItoSDE}.

\subsection{Derivation of the Controlled Mean Field Model}
Our next result (Theorem \ref{Thm:MeanFieldPDEIVP}) derives the macroscopic mean field dynamics as a \emph{nonlinear} Fokker-Planck-Kolmogorov partial differential equation (PDE), and establishes the consistency of the mean field dynamics in the continuum limit vis-\`{a}-vis the finite population dynamics.
\begin{theorem}\label{Thm:MeanFieldPDEIVP}
Supposing \textbf{A1}, consider a population of $n$ interacting chiplets, where the $i$th chiplet position $\bm{x}_{i}\in\mathbb{R}^{2}$, $i\in\llbracket n\rrbracket$, evolves via \eqref{ItoSDE}. Denote the Dirac measure concentrated at $\bm{x}_{i}$ as $\delta_{\bm{x}_{i}}$ and let the random empirical measure $\rho^{n}:=\frac{1}{n} \sum_{i=1}^n \delta_{\bm{x}_{i}}$. Consider the empirical version of the dynamics \eqref{ItoSDE} given by
$$\differential\bm{x}_{i} = \bm{f}^u\left(\bm{x}_{i},t,u,\rho^{n}\right)\:\differential t + \sqrt{2\beta^{-1}}\:\differential\bm{w}_{i}(t),$$
with respective initial conditions $\bm{x}_{0i}\in\mathbb{R}^{2}$, $i\in\llbracket n\rrbracket$, which are independently sampled from a given PDF $\rho_{0}$ supported on a subset of $\mathbb{R}^{2}$.
  Then, as $n\rightarrow \infty$, almost surely $\rho^{n} \rightharpoonup \rho$ where the deterministic function $\rho$ is a PDF that evolves as per the macroscopic dynamics 
\begin{align}
\frac{\partial \rho}{\partial t}&=-\nabla \cdot(\rho \bm{f}^u) + \beta^{-1}\Delta\rho\nonumber\\
&= \nabla\cdot\left(\rho\nabla\left(\rho * \phi^{u}+\beta^{-1}(1+\log \rho)\right)\right),
\label{PDFdyn}
\end{align}
with the initial condition
\begin{align}
\rho(\cdot,t=0)=\rho_{0}\in\mathcal{P}\left(\mathbb{R}^2\right)\;(\text{given}).
\label{IC}    
\end{align}
\end{theorem}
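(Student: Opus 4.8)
The plan is to read \eqref{PDFdyn} as the forward Kolmogorov equation of the McKean--Vlasov limit of the interacting diffusion \eqref{ItoSDE}, and to establish the claimed convergence $\rho^n\rightharpoonup\rho$ by a classical synchronous-coupling (propagation of chaos) argument. Concretely, I would (i) introduce $n$ independent copies of the limiting mean field SDE driven by the \emph{same} Brownian motions and initial data, (ii) show the pathwise coupling error vanishes as $n\to\infty$ using the Lipschitz estimates already verified in the preceding subsection together with a law of large numbers for empirical measures, and (iii) identify the limiting law as the unique weak solution of \eqref{PDFdyn} via It\^{o}'s formula and a martingale argument. The final algebraic rewriting into the second line of \eqref{PDFdyn} is then immediate.

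For steps (i)--(ii), let $\bar{\bm{x}}_i$ solve $\differential\bar{\bm{x}}_i = \bm{f}^u(\bar{\bm{x}}_i,t,u,\rho)\,\differential t + \sqrt{2\beta^{-1}}\,\differential\bm{w}_i(t)$ with $\bar{\bm{x}}_i(0)=\bm{x}_{0i}$, where $\rho(\cdot,t)$ is the common law of $\bar{\bm{x}}_i(t)$; by the well-posedness established above these are i.i.d. and $\rho$ is well-defined. Writing $\bar{\rho}^n := \frac1n\sum_i\delta_{\bar{\bm{x}}_i}$ and subtracting the two SDEs, the drift difference $\bm{f}^u(\bm{x}_i,\cdot,\rho^n)-\bm{f}^u(\bar{\bm{x}}_i,\cdot,\rho)$ splits into a spatial part, bounded by $L\|\bm{x}_i-\bar{\bm{x}}_i\|_2$ since $\nabla_{\bm{x}}\phi^u$ is Lipschitz, and a measure part, bounded via the estimate $\|\bm{f}^u(\cdot,\rho)-\bm{f}^u(\cdot,\widetilde\rho)\|_2\le c\,{\rm{dist}}_{\rm{BL}}(\rho,\widetilde\rho)$ derived in the verification of condition (iv). Using ${\rm{dist}}_{\rm{BL}}(\rho^n,\rho)\le {\rm{dist}}_{\rm{BL}}(\rho^n,\bar{\rho}^n)+{\rm{dist}}_{\rm{BL}}(\bar{\rho}^n,\rho)$, the first term is controlled by $\frac1n\sum_i\|\bm{x}_i-\bar{\bm{x}}_i\|_2$ and the second tends to $0$ almost surely by the Varadarajan (Glivenko--Cantelli-type) law of large numbers for i.i.d. empirical measures. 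A Gr\"onwall inequality applied to $\max_i\mathbb{E}\sup_{[0,T]}\|\bm{x}_i-\bar{\bm{x}}_i\|_2^2$ then forces this quantity to $0$, whence $\rho^n\rightharpoonup\rho$, upgraded to almost-sure convergence by a Borel--Cantelli argument using the rate from the law of large numbers.

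For step (iii), applying It\^{o}'s formula to $\varphi(\bm{x}_i)$ for $\varphi\in C_b^2(\mathbb{R}^2)$, averaging over $i$, and noting the stochastic integral term is a martingale with quadratic variation of order $1/n$, I would pass to the limit to obtain the weak form $\langle\rho_t,\varphi\rangle-\langle\rho_0,\varphi\rangle = \int_0^t\langle\rho_s,\nabla\varphi\cdot\bm{f}^u+\beta^{-1}\Delta\varphi\rangle\,\differential s$, which after integration by parts is precisely the first line of \eqref{PDFdyn}; uniqueness of the nonlinear Fokker--Planck--Kolmogorov equation (inherited from well-posedness of the mean field SDE) guarantees the whole sequence converges to this $\rho$. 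The second line then follows by substituting $\bm{f}^u=-\nabla(\rho*\phi^u)$ from \eqref{Controlledvectorfield} and writing $\beta^{-1}\Delta\rho = \nabla\cdot(\rho\nabla(\beta^{-1}(1+\log\rho)))$, since $\rho\nabla\log\rho=\nabla\rho$ and the additive constant has zero gradient.

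The main obstacle is the genuinely non-standard, \emph{nonlinear} dependence of the drift on $\rho$ through the ratio $\bar{u}$ in \eqref{defubar}: unlike a plain convolution model, measure-continuity of $\bm{f}^u$ is not automatic, and the total-variation Lipschitz bound recorded in condition (iv) is useless for a coupling argument because empirical measures are mutually singular, so their total-variation distance stays near $1$. The crux that makes the scheme close is that the preceding subsection in fact establishes the stronger intermediate bound in the \emph{bounded-Lipschitz} distance, which metrizes weak convergence and is compatible with the law of large numbers for empirical measures; the delicate point to get right is therefore the almost-sure convergence ${\rm{dist}}_{\rm{BL}}(\bar{\rho}^n,\rho)\to0$ with a quantitative rate sufficient for the Borel--Cantelli upgrade.
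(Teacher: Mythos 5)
Your proposal is correct in outline but takes a genuinely different route from the paper. The paper does not build a coupling: after the same It\^{o}-formula-plus-Doob step you use in (iii) to kill the martingale term, it proves convergence of $\rho^n$ by a compactness argument --- relative compactness of the laws of $\{\rho^n\}$ in $\mathcal{P}\bigl(C([0,\infty),\mathcal{P}(\mathbb{R}^2))\bigr)$ via Oelschl\"{a}ger's tightness estimate and Prohorov's theorem, then the Skorohod representation theorem to convert weak convergence of laws into almost-sure convergence along a subsequence, and finally identification of the limit point as a weak solution of \eqref{PDFdyn}. Your synchronous-coupling (Sznitman-style) argument replaces the tightness/Prohorov/Skorohod machinery with i.i.d.\ copies of the McKean--Vlasov process, a Gr\"onwall estimate, and the law of large numbers for empirical measures in ${\rm{dist}}_{\rm{BL}}$. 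Your route buys a quantitative rate and convergence of the full sequence (the limit is constructed explicitly and uniqueness comes from the cited well-posedness result), whereas the paper's extraction is only subsequential and leaves the full-sequence claim resting implicitly on uniqueness of the limiting PDE. The paper's route buys weaker hypotheses: it never needs the drift to be Lipschitz in the measure argument in a metric that is small between mutually singular measures, while your Gr\"onwall step lives or dies on the ${\rm{dist}}_{\rm{BL}}$ bound --- a point you correctly flag as the crux.

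One caveat, which you inherit from the paper rather than introduce: the intermediate estimate $\|\bm{f}^u(\cdot,\rho)-\bm{f}^u(\cdot,\widetilde{\rho})\|_2\le c\,{\rm{dist}}_{\rm{BL}}(\rho,\widetilde{\rho})$ is obtained there by writing the difference as $\smallint\nabla_{\bm{x}}\phi^u\,(\rho-\widetilde{\rho})$, which tacitly treats $\phi^u$ as independent of the measure. Since $\bar{u}$ in \eqref{defubar} depends on $\rho$ through a ratio of integrals, closing your coupling scheme requires an additional lemma: Lipschitz dependence of $\bar{u}$ (hence of $\phi^u$) on $\rho$ in ${\rm{dist}}_{\rm{BL}}$, including a uniform lower bound on the denominator $\smallint C_{\rm{ce}}(\|\bm{x}-\bm{y}\|_2)\rho(\bm{y})\,\differential\bm{y}$. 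You identify the right difficulty but point to a derivation that does not actually cover it.
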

\begin{proof}
To describe the dynamics of $\rho^n$ as $n\rightarrow \infty$, we start with investigating the time evolution of the quantity 
\begin{align}
    \left\langle\varphi, \rho^n\right\rangle := \frac{1}{n} \sum_{i=1}^n \varphi\left(\bm{x}_i\right)
\end{align}
for any compactly supported test function $\varphi \in C_b^2(\mathbb{R}^2)$.

Using Ito's rule, we have
\begin{align}
    \differential \varphi\left(\bm{x}_i\right)=L_{\rho^n} \varphi\left(\bm{x}_i\right) \differential t+\nabla \varphi^{\top}\left(\bm{x}_i\right) \sqrt{2\beta^{-1}} \differential \bm{w}_{i}
\end{align}
wherein the infinitesimal generator
\begin{align}
    L_\rho \varphi(\bm{x}):=\left\langle \bm{f}^{u}(\bm{x},t,u, \rho), \nabla_{\bm{x}} \varphi(x)\right\rangle+\beta^{-1} \Delta\varphi.
    \label{InfinitesimalGeneratorL}
\end{align}
Thus,
\begin{align}
   \differential\left\langle\varphi, \rho^n\right\rangle&=\frac{1}{n} \sum_{i=1}^n \differential \varphi\left(\bm{x}_i\right)\nonumber\\
& =\left\langle L_{\rho^n} \varphi, \rho^n\right\rangle \differential t+\frac{1}{n} \sum_{i=1}^n \sqrt{2 \beta^{-1}}\nabla  \varphi^{\top}\left(\bm{x}_i\right) \differential \bm{w}_{i}\nonumber \\
& :=\left\langle L_{\rho^n} \varphi, \rho^n\right\rangle \mathrm{d} t+\mathrm{d} M_t^n\label{dphirho}
\end{align}
where $M_t^n$ is a local martingale. 

Because $\varphi \in C_b^2(\mathbb{R}^2)$, we have $\left|\sqrt{2 \beta^{-1}}\nabla  \varphi^{\top}\left(\bm{x}_i\right)\right|\leq C$ 
uniformly for some $C>0$. Notice that the quadratic variation of the noise term in \eqref{dphirho} is
\begin{align*}
    \left[M_t^n\right]:=\frac{1}{n^2} \sum_{i=1}^n \int_0^t\left|\sqrt{2 \beta^{-1}}\nabla  \varphi^{\top}\left(\bm{x}_i(s)\right)\right|^2 \mathrm{~d} s \leq \frac{t C^2}{n},
\end{align*}
and using Doob's martingale inequality \cite[Ch. 14.11]{williams1991probability},
\begin{align}
    \!\!\!\!\!\mathbb{E}\left(\sup _{t \leq T} M_t^n\right)^2 \leq \mathbb{E}\left(\sup _{t \leq T}\left(M_t^n\right)^2\right) &\leq 4 \mathbb{E}\left(\left(M_t^n\right)^2\right) \nonumber\\
    &\leq 4 \mathbb{E}\left(\left[M_t^n\right]\right) \leq \frac{4 t C^2}{n}.\nonumber
\end{align}
Hence in the limit $n\rightarrow \infty$, the noise term in \eqref{dphirho} vanishes, resulting in a deterministic evolution equation.

For any $t>0$, we take $\{\rho^n\}_{n=1}^{\infty}$ to be the (random) elements of $\Omega=C([0, \infty), \mathcal{P}(\mathbb{R}^2))$, the set of continuous functions from $[0, \infty)$ into $\mathcal{P}(\mathbb{R}^2)$ endowed with the topology of weak convergence. Following the argument of Oelschl\"{a}ger \cite[Proposition 3.1]{oelschlager1985law}, the sequence $\mathbb{P}_n$ of joint PDFs on $\Omega$ induced by the processes $\{\rho^n\}_{n=1}^{\infty}$ , is relatively compact in $\mathcal{P}\left(\Omega\right)$, which is the space of probability measures on $\Omega$. Oelschl\"{a}ger's proof makes use of the Prohorov's theorem \cite[Ch. 5]{billingsley2013convergence}. The relative compactness implies that the sequence $\mathbb{P}_n$ weakly converges (along a subsequence) to some $\mathbb{P}$, where $\mathbb{P}$ is the joint PDFs induced by the limiting process $\rho$. By Skorohod representation theorem \cite[Theorem 6.7]{billingsley2013convergence}, the sequence $\{\rho^n\}_{n=1}^{\infty}$ converges $\mathbb{P}$-almost surely to $\rho$. Since the martingale term in \eqref{dphirho} vanishes as $n \rightarrow \infty$, we obtain
\begin{align}
\differential \left\langle\varphi, \rho\right\rangle=\left\langle L_\rho \varphi, \rho\right\rangle \differential t=\left\langle\varphi, L_\rho^* \rho\right\rangle \differential t  
\label{adjoint}
\end{align}
where  $L^*$ is the adjoint (see e.g., \cite[Ch. 2.3, 2.5]{pavliotis2014stochastic}, \cite[p. 278]{bogachev2015fokker}) of the generator $L$ given by \eqref{InfinitesimalGeneratorL}, and is defined as
\begin{align*}
L_m^* \rho(x,t):&=-\nabla \cdot(\rho \bm{f}^{u}(\bm{x},t,u, m))+\beta^{-1} \Delta\rho\\
&= \nabla\cdot\left(\rho\nabla\left(m * \phi^{u}+\beta^{-1}(1+\log \rho)\right)\right)
\end{align*}
where $m\in\mathcal{P}\left(\mathbb{R}^2\right)$. For any test function $\varphi \in C_b^2(\mathbb{R}^2)$, \eqref{adjoint} is valid almost everywhere, and therefore, $\rho$ is almost surely a weak solution to the nonlinear Fokker-Planck-Kolmogorov PDE initial value problem \eqref{PDFdyn}-\eqref{IC}.
\end{proof}
\noindent Notice that the Cauchy problem \eqref{PDFdyn}-\eqref{IC} involves a \emph{nonlinear nonlocal PDE} which in turn depends on control policy $u$.

The solution $\rho(\bm{x},t)$, $\bm{x}\in\mathbb{R}^2$, $t\in[0,\infty)$, for the Cauchy problem \eqref{PDFdyn}-\eqref{IC} is understood in weak sense. In other words, for all compactly supported smooth test functions $\theta\in C_{c}^{\infty}\left(\mathbb{R}^{2},[0,\infty)\right)$, the solution $\rho(\bm{x},t)$ satisfies
\begin{align}
\int_0^{\infty}\!\!\!\int_{\mathbb{R}^2}\!\!\left(\!\frac{\partial \theta}{\partial t} \!+\! L_{\rho}\theta\!\right)\!\rho\:\mathrm{d} \boldsymbol{x}\:\mathrm{d}t
\!+ \!\int_{\mathbb{R}^2}\!\rho_0(\boldsymbol{x}) \theta(\boldsymbol{x}, 0)\:\mathrm{d}\boldsymbol{x}=0
\label{WeakSoln}
\end{align}
where $L_{\rho}$ is defined as in \eqref{InfinitesimalGeneratorL}. The reason why $\rho$ satisfying \eqref{WeakSoln} for all $\theta\in C_{c}^{\infty}\left(\mathbb{R}^{2},[0,\infty)\right)$ is called a ``weak solution" of \eqref{PDFdyn}-\eqref{IC} is because such $\rho$ may not be sufficiently smooth to satisfy \eqref{PDFdyn}. In the next Section, we provide a variational interpretation of the solution for problem \eqref{PDFdyn}-\eqref{IC}.

%%%%%%%%%%%%%%%%%%%%%%%%%%%%%%%%%%%%%%%%%%

\section{Chiplet Population Dynamics as Wasserstein Gradient Flow}\label{sec:WassGradFlow}
The structure of the PDE in \eqref{PDFdyn} motivates defining an \emph{energy functional}
\begin{align}
\Phi(\rho) &:= \Phicc(\rho) + \Phice(\rho)+\mathbb{E}_{\rho}\left[\beta^{-1}\log \rho\right]\nonumber\\ &=\mathbb{E}_{\rho}\left[\rho * \phi^u+\beta^{-1}\log \rho\right]
\label{equ:Phi}
\end{align}
where $\mathbb{E}_{\rho}$ denotes the expectation w.r.t. the PDF $\rho$, and
\begin{subequations}
\begin{align}
\Phicc(\rho) &:= \int_{\mathbb{R}^2 \times \mathbb{R}^2} \phicc^u(\bm{x},\bm{y})\rho(\bm{x}) \rho(\bm{y}) \differential\bm{x}\:\differential\bm{y},
\label{Phicc}\\
\Phice(\rho) &:= \int_{\mathbb{R}^2 \times \mathbb{R}^2} \phice^u(\bm{x},\bm{y})\rho(\bm{x}) \rho(\bm{y}) \differential\bm{x}\:\differential\bm{y}. \label{Phice}
\end{align}
\label{PhiccPhice}
\end{subequations}
In \eqref{equ:Phi}, the term $\mathbb{E}_{\rho}\left[\rho * \phi^u\right]$ quantifies the \emph{interaction energy} while the term $\beta^{-1}\mathbb{E}_{\rho}\left[\log \rho\right]$ (scaled negative entropy) quantifies the \emph{internal energy}. We have the following result.

\begin{theorem}\label{thm:WassGradFlow}
Let $\Phi:\mathcal{P}_{2}\left(\mathbb{R}^2\right)\mapsto\mathbb{R}$ be the energy functional given in \eqref{equ:Phi}. Then, \\
\noindent (i) the chiplet population dynamics given by \eqref{Controlledvectorfield}, \eqref{InteractionPotential}, \eqref{PDFdyn} is Wasserstein gradient flow of the functional $\Phi$, i.e.,
\begin{align}
\frac{\partial \rho}{\partial t}=-\nabla^W \Phi(\rho).
\label{WassGradFlow}
\end{align}
(ii) $\Phi$ is a Lyapunov functional that is decreasing along the flow generated by \eqref{PDFdyn}, i.e., $\frac{\differential}{\differential t}\Phi \leq 0$. 
\end{theorem}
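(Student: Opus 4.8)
The plan is to prove (i) by directly computing the first variation $\delta\Phi/\delta\rho$ and substituting it into the definition \eqref{equ:Wasserstein gradient} of the Wasserstein gradient, so that $-\nabla^{W}\Phi(\rho)=\nabla\cdot(\rho\nabla(\delta\Phi/\delta\rho))$ can be matched term-by-term against the right-hand side of \eqref{PDFdyn}. The internal-energy (entropy) contribution is handled exactly as in the worked example following \eqref{equ:Wasserstein gradient}: $\frac{\delta}{\delta\rho}\mathbb{E}_{\rho}[\beta^{-1}\log\rho]=\beta^{-1}(1+\log\rho)$. Statement (i) therefore reduces to showing that the first variation of the interaction energy $\Phicc(\rho)+\Phice(\rho)=\mathbb{E}_{\rho}[\rho*\phi^{u}]$ equals $\rho*\phi^{u}$ up to an additive constant, since constants are annihilated by $\nabla$ and hence invisible to $\nabla^{W}$.

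The hard part, and the place where the model departs from the textbook interaction-energy calculation, is exactly this interaction term, because the kernel $\phi^{u}$ is (a) not symmetric in $\bm{x},\bm{y}$ (Remark \ref{RemarkPotentialSymmetry}) and (b) itself a functional of $\rho$ through the nonlocal weighted average $\bar{u}$ in \eqref{defubar}. I would organize $\frac{d}{d\epsilon}\big|_{0}(\Phicc+\Phice)(\rho+\epsilon\eta)$ into \emph{explicit} variations (differentiating the two factors $\rho(\bm{x}),\rho(\bm{y})$ with the kernel frozen) plus \emph{implicit} variations (differentiating $\phi^{u}$ through $\bar{u}$). The key device for the implicit part is the observation that $\bar{u}(\bm{x})$ in \eqref{defubar} is precisely the $\Cce\rho$-weighted conditional mean of $u$, i.e. the minimizer of $a\mapsto\int_{\mathbb{R}^2}\Cce(\|\bm{x}-\bm{y}\|_2)(u(\bm{y})-a)^2\rho(\bm{y})\differential\bm{y}$, so that the first-order optimality condition
\begin{align}
\int_{\mathbb{R}^2}\Cce(\|\bm{x}-\bm{y}\|_2)\left(u(\bm{y})-\bar{u}(\bm{x})\right)\rho(\bm{y})\differential\bm{y}=0
\label{envelope}
\end{align}
holds. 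Through \eqref{envelope} (an envelope-type argument) the implicit variation of $\Phice$ along $\bar{u}$ vanishes identically, so only the explicit $\phice$ contribution survives there. The remaining implicit variation coming from $\Phicc$, together with the asymmetric explicit contributions, must then be shown to collapse onto $\rho*\phi^{u}$. Verifying this collapse, i.e. reconciling the asymmetry of $\phi^{u}$ and the self-consistent $\rho$-dependence of $\bar{u}$ with the clean form $\delta\Phi/\delta\rho=\rho*\phi^{u}+\beta^{-1}(1+\log\rho)$ demanded by \eqref{PDFdyn}, is the main obstacle I anticipate, and it is where \eqref{envelope} and the quadratic structure of \eqref{Ucc}--\eqref{Uce} must be exploited decisively.

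Once (i) is in hand, part (ii) is the standard dissipation identity for a Wasserstein gradient flow and requires no new structural input. Writing $g:=\delta\Phi/\delta\rho=\rho*\phi^{u}+\beta^{-1}(1+\log\rho)$, I would use the total first variation to write $\frac{\differential}{\differential t}\Phi=\langle g,\partial_{t}\rho\rangle$, substitute the gradient-flow form $\partial_{t}\rho=-\nabla^{W}\Phi(\rho)=\nabla\cdot(\rho\nabla g)$ from \eqref{WassGradFlow}, and integrate by parts in $\bm{x}$ to obtain
\begin{align}
\frac{\differential}{\differential t}\Phi=\int_{\mathbb{R}^2}g\,\nabla\cdot\left(\rho\nabla g\right)\differential\bm{x}=-\int_{\mathbb{R}^2}\rho\,\|\nabla g\|_2^{2}\,\differential\bm{x}\leq 0,
\end{align}
where the boundary terms vanish under the decay guaranteed by \textbf{A1}--\textbf{A2} and the finite-second-moment $\mathcal{P}_2$ setting. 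The non-negativity of the integrand together with $\rho\geq 0$ yields the claimed monotonicity and identifies $\Phi$ as a Lyapunov functional. The one point meriting care is that $\frac{\differential}{\differential t}\Phi=\langle\delta\Phi/\delta\rho,\partial_t\rho\rangle$ uses the \emph{total} first variation, so its validity is inherited directly from the nontrivial computation carried out in part (i).
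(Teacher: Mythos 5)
Your part (ii) is exactly the paper's argument (chain rule, substitute the gradient-flow form, integrate by parts via the gradient--divergence duality \eqref{graddivduality}), so nothing to flag there. The problem is part (i). The paper's entire proof of (i) consists of asserting the identity \eqref{equ:derivation of Phi}, namely $\frac{\delta\Phi}{\delta\rho}=\rho*\phi^u+\beta^{-1}(1+\log\rho)$, and then pattern-matching against \eqref{PDFdyn}. You have correctly recognized that this identity is the whole content of (i) and that it is not the textbook interaction-energy computation, for exactly the two reasons you list: $\phi^u$ is asymmetric in $\bm{x},\bm{y}$ (Remark \ref{RemarkPotentialSymmetry}) and depends on $\rho$ through $\bar{u}$. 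Your envelope observation --- that $\bar{u}(\bm{x})$ minimizes $a\mapsto\int\Cce(\|\bm{x}-\bm{y}\|_2)(u(\bm{y})-a)^2\rho(\bm{y})\differential\bm{y}$, so the implicit variation of $\Phice$ through $\bar{u}$ vanishes --- is correct and is a genuinely useful device that the paper does not supply. But you then stop: the claim that the remaining pieces ``collapse onto $\rho*\phi^u$'' is announced as the main anticipated obstacle rather than proved, and that is precisely the step that constitutes statement (i). As written, the proposal does not establish the theorem.

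The collapse is also not automatic, so it cannot be waved through. Even with the kernel frozen, the explicit first variation of $\int\!\!\int\phi^u(\bm{x},\bm{y})\rho(\bm{x})\rho(\bm{y})\differential\bm{x}\differential\bm{y}$ evaluated at $\bm{x}$ is $(\rho*\phi^u)(\bm{x})+\int\phi^u(\bm{y},\bm{x})\rho(\bm{y})\differential\bm{y}$ (note that \eqref{PhiccPhice} carries no factor $\tfrac12$). The $\phicc$ part of the transposed term duplicates $(\rho*\phicc^u)(\bm{x})$ by symmetry, while the $\phice$ part produces $\tfrac12\int\Cce(\|\bm{x}-\bm{y}\|_2)\left(u(\bm{x})-\bar{u}(\bm{y})\right)^2\rho(\bm{y})\differential\bm{y}$, which is a different object from $(\rho*\phice^u)(\bm{x})$; on top of this sits the implicit variation of $\Phicc$, which your optimality identity does not annihilate (that identity involves $\Cce$ and $u$, not $\Ccc$ and $\bar{u}$). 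To finish (i) you must either show that the sum of these surplus terms has vanishing Euclidean gradient --- hence is invisible to $\nabla^W$ --- or argue that the interaction energy must be read with the appropriate symmetrization and normalization for \eqref{equ:derivation of Phi} to hold. Neither is carried out, so the decisive identity remains unproven in your proposal (and is merely asserted in the paper).
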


\begin{proof}
(i) We start by noticing that the functional derivative
\begin{equation}
    \frac{\delta \Phi}{\delta \rho}=\rho * \phi^u+\beta^{-1}(1+\log \rho).
    \label{equ:derivation of Phi}
\end{equation}
Next, we rewrite \eqref{PDFdyn} as
\begin{equation}
\frac{\partial \rho}{\partial t}=\nabla \cdot \left(\rho \nabla\frac{\delta \Phi}{\delta \rho}\right),
\label{PDFdynAsWassGrad}
\end{equation}
which by definition \eqref{equ:Wasserstein gradient}, yields \eqref{WassGradFlow}.

\noindent (ii) To show that $\Phi$ is decreasing along the flow generated by \eqref{PDFdyn}, we find
\begin{equation}
    \begin{aligned}
    \frac{\differential}{\differential t} \Phi&=\int \frac{\delta \Phi}{ \delta \rho}  \: \frac{\partial \rho}{\partial t} \differential \bm{x}\\
    &\overset{\eqref{PDFdynAsWassGrad}}=\int \frac{\delta  \Phi }{ \delta \rho}\: \nabla \cdot \left(\rho \nabla \frac{\delta \Phi}{\delta \rho}\right) \differential \bm{x}\\
    &= -\int \bigg\langle \nabla \frac{\delta \Phi}{ \delta \rho}  ,\rho \nabla \frac{\delta \Phi}{\delta \rho} \bigg\rangle \differential \bm{x}\\
    &= -\int \bigg\langle \nabla  \frac{\delta \Phi}{\delta \rho}  , \nabla \frac{\delta \Phi}{\delta \rho} \bigg\rangle \rho \differential \bm{x}\\
    &= - \mathbb{E}_{\rho} \left[ \bigg\| \nabla \frac{\delta \Phi}{\delta \rho} \bigg\|_{2}^{2} \right] \leq 0.
    \end{aligned}
    \label{equ:dPhidt}
\end{equation}
In order to get from the second line to the third line of \eqref{equ:dPhidt}, we used the duality\footnote{In words, the gradient and the negative divergence are adjoint maps.}between the gradient and divergence operators, namely the fact that for differentiable scalar field $s(\bm{x})$ and vector field $\bm{v}(\bm{x})$, we have
\begin{align}
\langle \nabla s,\bm{v}\rangle_{L_{2}} + \langle s,\nabla\cdot\bm{v}\rangle_{L_{2}}=0,   
\label{graddivduality} 
\end{align}
where $\langle \bm{p}, \bm{q} \rangle_{L_{2}} := \int \langle \bm{p}, \bm{q}\rangle \differential\bm{x}$. Specifically, in \eqref{equ:dPhidt}, $s\equiv \frac{\delta \Phi}{\delta \rho}$ and $\bm{v}\equiv \rho \nabla \frac{\delta \Phi}{\delta \rho}$. 
\end{proof}
\begin{remark}\label{rmk:WassGradFlow}
Theorem \ref{thm:WassGradFlow} shows that for an admissible control policy $u\in\mathcal{U}$, the chiplet population dynamics \eqref{PDFdyn}-\eqref{IC} can be seen as gradient descent of the energy functional $\Phi$ on the manifold $\mathcal{P}_2\left(\mathbb{R}^2\right)$ w.r.t. the Wasserstein metric. We point out that the statement of Theorem \ref{thm:WassGradFlow} remains valid in the deterministic limit, i.e., when the noise strength $\sqrt{2\beta^{-1}}\downarrow 0$. In that case, the functional $\Phi$ in \eqref{equ:Phi} comprises of only the interaction energy term $\mathbb{E}_{\rho}\left[\rho * \phi^u\right]$, and $\frac{\delta\Phi}{\delta\rho} = \rho * \phi^u$. Other than this, the proof of Theorem \ref{thm:WassGradFlow} remains unchanged.
\end{remark}
\begin{remark}
In the recent systems-control literature, the Wasserstein gradient flow interpretations and related proximal algorithms \cite{caluya2019gradient,caluya2021wasserstein} for several linear and nonlinear Fokker-Planck-Kolmogorov PDEs in prediction and density control have appeared. New gradient flow interpretations have also appeared \cite{halder2017gradient,halder2018gradient,halder2019proximal} for well-known filtering equations. We next point out that the Wasserstein gradient flow interpretation deduced in Theorem \ref{thm:WassGradFlow} allows approximating the weak solution of \eqref{WassGradFlow} by recursive evaluation of a Wasserstein proximal operator on the manifold $\mathcal{P}_2\left(\mathbb{R}^{2}\right)$.
\end{remark}

\begin{theorem}\label{thm:WassProxRecursion}
For a given control policy $u\in\mathcal{U}$ and potentials \eqref{InteractionPotential}, let $\widehat{\Phi}(\varrho,\varrho_{k-1}):=\mathbb{E}_{\varrho}\left[\varrho_{k-1} * \phi^u+\beta^{-1}\log \varrho\right]$, $\varrho,\varrho_{k-1}\in\mathcal{P}_{2}(\mathbb{R}^2)$, $k\in\mathbb{N}$. Consider the Wasserstein proximal recursion:
\begin{align}
\!\!\varrho_{k} &= {\mathrm{prox}}^{W}_{\tau\widehat{\Phi}}\left(\varrho_{k-1}\right)\nonumber\\
&:= \underset{\varrho\in \mathcal{P}_2\left(\mathbb{R}^{2}\right)}{\arg\inf}\:\bigg\{\!\!\frac12\:W^{2}\left(\varrho,\varrho_{k-1}\right) + \tau\:\widehat{\Phi}(\varrho,\varrho_{k-1})\!\!\bigg\} %\;k \in\mathbb{N},
\label{WassProx}
\end{align}
over discrete time $t_{k-1}:=(k-1)\tau$ with fixed step-size $\tau>0$, and with initial condition $\varrho_{0}\equiv\rho_0\in\mathcal{P}_2\left(\mathbb{R}^2\right)$. Let $\rho(\bm{x},t)$ be the weak solution of \eqref{WassGradFlow} for the same $u\in\mathcal{U}$ and the functional $\Phi$ given by \eqref{equ:Phi}-\eqref{PhiccPhice}. Using the sequence of functions $\{\varrho_{k-1}\}_{k\in\mathbb{N}}$ generated by the recursion \eqref{WassProx}, define an interpolation $\varrho_{\tau}:\mathbb{R}^{2}\times[0,\infty)\mapsto [0,\infty)$ as
$$\varrho_{\tau}(\bm{x},t) := \varrho_{k-1}(\bm{x},\tau)\quad \forall\:t\in [(k-1)\tau,k\tau), \quad k\in\mathbb{N}.$$ 
Then $\varrho_{\tau}(\bm{x},t)\xrightarrow[]{\tau\downarrow 0}\rho(\bm{x},t)$ in $L^{1}(\mathbb{R}^{2})$ for all $t\in[0,\infty)$.
\end{theorem}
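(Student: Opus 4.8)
The plan is to follow the Jordan–Kinderlehrer–Otto minimizing-movements program, adapted to the present \emph{semi-implicit} scheme in which the nonlinear, nonlocal, and (by Remark~\ref{RemarkPotentialSymmetry}) non-symmetric interaction is frozen at the previous iterate $\varrho_{k-1}$. Fix $\tau>0$ and write $J_k(\varrho):=\tfrac12 W^2(\varrho,\varrho_{k-1})+\tau\widehat{\Phi}(\varrho,\varrho_{k-1})$. The freezing is the crucial design choice: it renders $\varrho\mapsto\widehat{\Phi}(\varrho,\varrho_{k-1})$ the sum of a \emph{linear} potential term $\int_{\mathbb{R}^2}\varrho\,(\varrho_{k-1}*\phi^u)\,\differential\bm{x}$ (with $\phi^u$, hence $\bar u$ of \eqref{defubar}, evaluated at $\varrho_{k-1}$) and the scaled negative entropy $\beta^{-1}\int\varrho\log\varrho$, so that each step is a \emph{standard} JKO step with a fixed potential plus entropy. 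First I would establish well-posedness of \eqref{WassProx} by the direct method: $\varrho\mapsto W^2(\varrho,\varrho_{k-1})$ is weakly lower semicontinuous with second-moment-bounded sublevel sets, the entropy is weakly lower semicontinuous and bounded below on $\mathcal{P}_2(\mathbb{R}^2)$ via Gaussian comparison, and the linear term is continuous along second-moment-bounded narrowly convergent sequences because $\phi^u$ is bounded and $\nabla_{\bm{x}}\phi^u$ is bounded Lipschitz by the standing assumptions of Sec.~\ref{sec:Model}. Coercivity plus lower semicontinuity give a minimizer; strict convexity of $W^2(\cdot,\varrho_{k-1})$ along generalized geodesics together with geodesic convexity of the entropy gives uniqueness once $\tau$ is small enough that the transport term dominates the possibly nonconvex potential. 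Note that $\tfrac{\delta\widehat{\Phi}}{\delta\varrho}(\cdot,\varrho_{k-1})=\varrho_{k-1}*\phi^u+\beta^{-1}(1+\log\varrho)$ coincides, at $\varrho=\varrho_{k-1}$, with $\tfrac{\delta\Phi}{\delta\rho}$ of \eqref{equ:derivation of Phi}, which is precisely what makes the scheme consistent with the gradient flow \eqref{WassGradFlow}.

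Next I would extract the discrete a priori estimates. Minimality $J_k(\varrho_k)\le J_k(\varrho_{k-1})$ together with $\widehat{\Phi}(\varrho_{k-1},\varrho_{k-1})=\Phi(\varrho_{k-1})$ yields $\tfrac12 W^2(\varrho_k,\varrho_{k-1})\le\tau\big(\Phi(\varrho_{k-1})-\Phi(\varrho_k)\big)+\tau\,\mathcal{E}_k$, where $\mathcal{E}_k:=\Phi(\varrho_k)-\widehat{\Phi}(\varrho_k,\varrho_{k-1})$ is the discrepancy caused by freezing. Using the regularity of $\phi^u$ from Sec.~\ref{sec:Model} and the bounds ${\rm{dist}}_{\rm{BL}}\le 2\,{\rm{dist}}_{\rm{TV}}$ and ${\rm{dist}}_{\rm{BL}}\le W$, one bounds $|\mathcal{E}_k|\le C\,W(\varrho_k,\varrho_{k-1})$; absorbing the cross term by Young's inequality gives $\tfrac14 W^2(\varrho_k,\varrho_{k-1})\le\tau\big(\Phi(\varrho_{k-1})-\Phi(\varrho_k)\big)+C^2\tau^2$. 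Summing over $k$ and using the lower bound on $\Phi$ produces $\sum_k W^2(\varrho_k,\varrho_{k-1})=O(\tau)$ along with uniform bounds on second moments and entropy of the $\varrho_k$. These estimates control the interpolation, giving $W(\varrho_\tau(t),\varrho_\tau(s))\le C(|t-s|^{1/2}+\tau^{1/2})$ and equi-tightness. A refined Arzel\`a--Ascoli argument in $(\mathcal{P}_2(\mathbb{R}^2),W)$ \cite[Ch.~2,3]{ambrosio2008gradient} then extracts a subsequence with $\varrho_{\tau_j}(\cdot,t)\rightharpoonup\rho(\cdot,t)$ locally uniformly in $t$; the second-moment bound upgrades this to $W$-convergence, and the entropy bound yields equi-integrability, hence convergence in $L^1(\mathbb{R}^2)$ for each $t$.

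Then I would pass to the limit in the first-order optimality condition. Perturbing the minimizer by $\varrho_k^s:=(\mathrm{id}+s\nabla\zeta)_\#\varrho_k$ for $\zeta\in C_c^\infty(\mathbb{R}^2)$ and differentiating $J_k$ at $s=0$ gives, with $T_k$ the optimal map pushing $\varrho_k$ to $\varrho_{k-1}$, the discrete Euler--Lagrange identity
\begin{align*}
\frac1\tau\!\int_{\mathbb{R}^2}\!\langle \bm{x}-T_k(\bm{x}),\nabla\zeta\rangle\,\differential\varrho_k
= -\int_{\mathbb{R}^2}\!\Big\langle\nabla\big(\varrho_{k-1}*\phi^u\big)+\beta^{-1}\nabla\log\varrho_k,\nabla\zeta\Big\rangle\differential\varrho_k.
\end{align*}
Integrating the entropy contribution by parts produces the diffusion term $+\beta^{-1}\int_{\mathbb{R}^2}\varrho_k\Delta\zeta\,\differential\bm{x}$, consistent with $\beta^{-1}\Delta\rho$ in \eqref{PDFdyn}. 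Multiplying by a smooth temporal test function, summing over $k$, and using summation by parts in time recovers the weak formulation \eqref{WeakSoln}: the displacement term converges to $\int\!\int\partial_t\theta\,\rho$ because $\sum_k W^2(\varrho_k,\varrho_{k-1})=O(\tau)$, while $W(\varrho_k,\varrho_{k-1})\to 0$ lets me replace the frozen drift $\nabla(\varrho_{k-1}*\phi^u)$ by $\nabla(\rho*\phi^u)$ evaluated at the limit, using boundedness and Lipschitz continuity of $\nabla_{\bm{x}}\phi^u$ and stability of $\bar u$ under small perturbations in ${\rm{dist}}_{\rm{BL}}$. Since the bounded-Lipschitz structure of $\bm{f}^u$ established for \eqref{ItoSDE} guarantees uniqueness of the weak solution of \eqref{PDFdyn}-\eqref{IC}, every subsequential limit equals $\rho$; hence the whole family $\varrho_\tau$ converges in $L^1(\mathbb{R}^2)$ for each $t$, as claimed.

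The hard part will be precisely the semi-implicit freezing of the nonlinear, non-symmetric interaction, including the $\rho$-dependence hidden in $\bar u$ of \eqref{defubar}. Because the kernel of the interaction energy itself depends on the unknown density, $\Phi$ need not be geodesically $\lambda$-convex, so the clean Ambrosio--Gigli--Savar\'e convergence theory does not apply verbatim; I must instead lean on the bounded-Lipschitz estimates for $\nabla_{\bm{x}}\phi^u$ and $\bar u$ to make two things quantitative: that the per-step discrepancy $\mathcal{E}_k$ is $O(W(\varrho_k,\varrho_{k-1}))$ and therefore summably small (so the $O(\tau)$ a priori bounds survive the approximation), and that the frozen drift converges to the genuine nonlinear drift in the limit (so the limiting equation is the correct \emph{nonlinear} PDE rather than a linearization). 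Controlling these two effects simultaneously is the crux of the argument.
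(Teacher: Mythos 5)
The paper does not give a self-contained proof of Theorem~\ref{thm:WassProxRecursion}: its entire proof is a pointer to \cite[Sec.~12.3--12.5]{laborde201712}, which carries out exactly the semi-implicit minimizing-movement analysis you outline --- freezing the nonlocal, non-symmetric interaction at $\varrho_{k-1}$ so that each step is a standard JKO step with a fixed potential plus entropy, deriving the discrete energy estimates, extracting compactness, and passing to the limit in the Euler--Lagrange identity. So your route is the intended one, and your identification of the crux (the per-step discrepancy $\mathcal{E}_k$ caused by freezing, and the stability of $\bar{u}$ in \eqref{defubar} under perturbations of the density) is precisely where the cited development does its work. One simplification you can make: since the frozen potential term $\varrho\mapsto\int\varrho\,(\varrho_{k-1}*\phi^u)$ is linear in the flat sense and the entropy is strictly convex, while $W^2(\cdot,\varrho_{k-1})$ is convex, the minimizer in \eqref{WassProx} is unique for every $\tau>0$; no smallness condition on $\tau$ or appeal to generalized-geodesic convexity is needed for uniqueness of the step.

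One step of your outline is genuinely too quick. Narrow convergence of $\varrho_{\tau}(\cdot,t)$ combined with the uniform entropy bound gives equi-integrability and hence only \emph{weak} $L^1$ convergence, whereas the theorem asserts strong $L^1(\mathbb{R}^2)$ convergence for each $t$. To upgrade, you need an additional spatial compactness estimate on the discrete iterates, e.g.\ a uniform bound on the Fisher information $\int_{\mathbb{R}^2}\|\nabla\sqrt{\varrho_k}\|_2^2\,\differential\bm{x}$ obtained by the flow-interchange technique (testing the optimality of $\varrho_k$ against a short run of the heat semigroup), or equivalently control of $\nabla\varrho_k$ read off from the discrete Euler--Lagrange equation; this is supplied in the cited reference and should be stated explicitly as a step in your argument. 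With that estimate in hand, the rest of your outline --- summability of $W^2(\varrho_k,\varrho_{k-1})$, the $\tfrac12$-H\"{o}lder-in-time bound, replacement of the frozen drift $\nabla(\varrho_{k-1}*\phi^u)$ by the limiting nonlinear drift via the Lipschitz bounds on $\nabla_{\bm{x}}\phi^u$ and $\bar{u}$, and uniqueness of the weak solution to conclude convergence of the full family --- is sound.
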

\begin{proof}
Follows the development in \cite[Sec. 12.3--12.5]{laborde201712}.
\end{proof}
\begin{remark}\label{Remark:Numerical}
For a given control policy $u\in\mathcal{U}$, the Wasserstein proximal recursion \eqref{WassProx} can in turn be leveraged for numerically updating the PDFs over discrete time with a small step-size $\tau$. To illustrate Theorem \eqref{thm:WassGradFlow}, we fixed a linear control policy $u=\langle\bm{k},\bm{x}\rangle$ with gain $\bm{k}:=(8.5\times 10^{-3}, -1\times10^{-2})^{\top}$, and solved \eqref{WassProx} with $\tau=0.1$ via \cite[Algorithm 1]{caluya2019gradient} for $n=400$ uniformly spaced grid samples in the domain $\left[-4 \:\mathrm{mm}, 4 \:\mathrm{mm}\right]^2$ starting from an initial bivariate Gaussian $\rho_0=\mathcal{N}\left((0.5, 0.5)^{\top},0.1\bm{I}_{2}\right)$. Fig. \ref{fig:Phi} shows the corresponding decay of the energy functional $\Phi$ in \eqref{equ:Phi}-\eqref{PhiccPhice}, computed using these PDFs obtained from the Wasserstein proximal updates. As in \cite[Sec. III]{matei2021micro}, our simulation used capacitances $\Ccc(\|\bm{x}-\bm{y}\|_2),\Cce(\|\bm{x}-\bm{y}\|_2)$ in \eqref{Ucc}-\eqref{Uce} of the form $\sum\limits_{i=1}^{n} a_i\left[\erf((\|\bm{x}-\bm{y}\|_2 +\delta)/c_i)-\erf((\|\bm{x}-\bm{y}\|_2 -\delta)/c_i)\right]$ where $\erf(\cdot)$ denotes the error function, the parameters $a_i,c_i$ are sampled uniformly random in $[0,1]$, and $\delta$ (half of the electrode pitch) $=10$ micrometer. 
\end{remark}

\begin{figure}[t]
% \vspace{.3in}
\centerline{\includegraphics[width=0.7\linewidth]{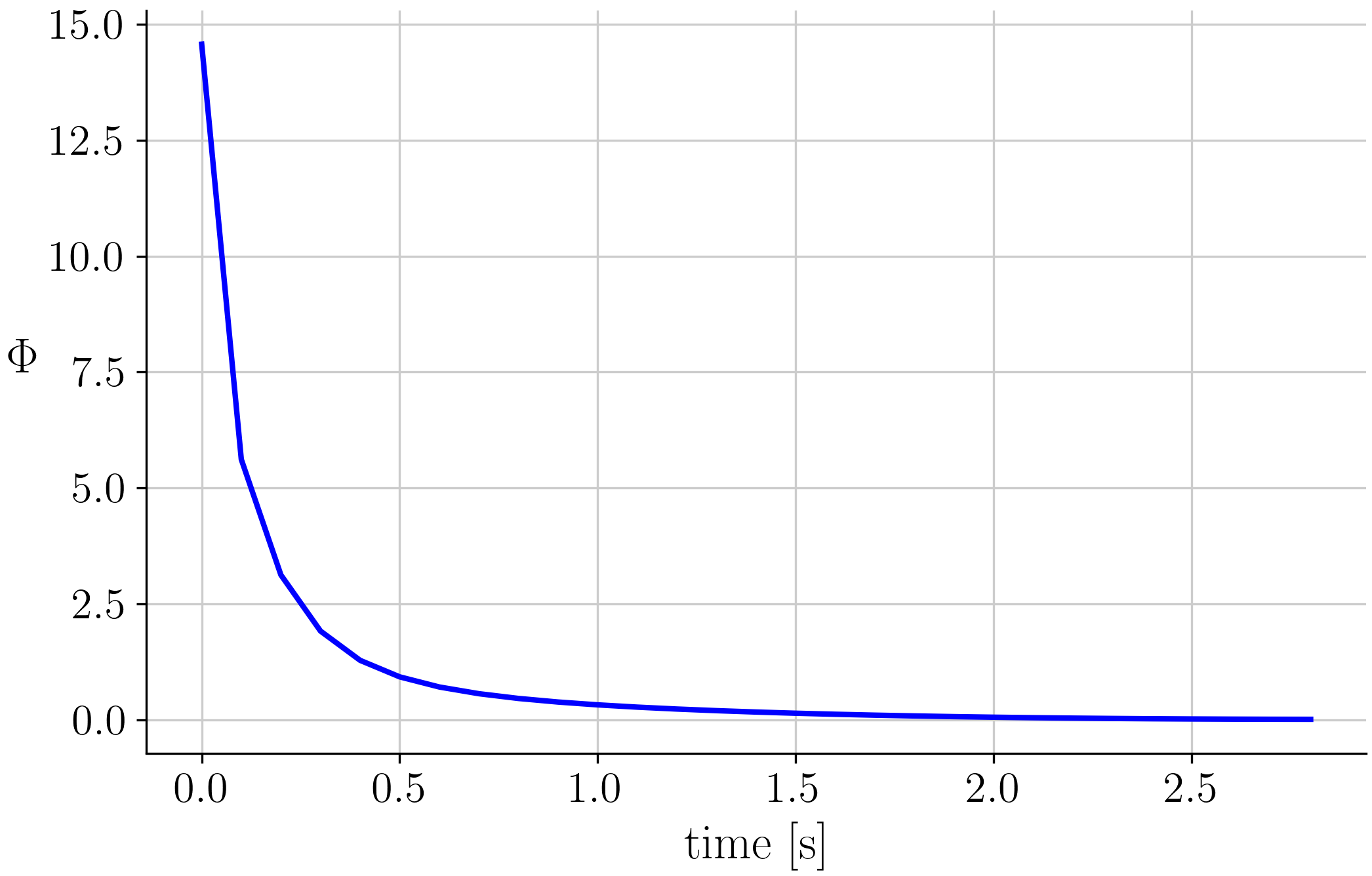}}
\vspace{.05in}
\caption{The energy functional $\Phi$ given by \eqref{equ:Phi}-\eqref{PhiccPhice} versus time for the simulation set up summarized in Remark \ref{Remark:Numerical}.}
\vspace*{-0.1in}
\label{fig:Phi}
\end{figure}

%%%%%%%%%%%%%%%%%%%%%%%%%%%%%%%%%%%%%%%%%%
\section{Conclusions}\label{sec:conclusions}
We presented a controlled mean filed model for the population dynamics of chiplets, which are tiny (micron sized or smaller) particles immersed in a dielectric liquid, and are amenable to reshape into desired concentrations for micro-assembly applications. In such applications, an array of electrodes generate a space-time varying electric potential landscape, thereby strategically inducing the collective motion of the chiplet ensemble. Our derived model quantifies how exactly the two types of nonlocal nonlinear interactions (viz. chiplet-to-chiplet and chiplet-to-electrode) jointly induce a macroscopic dynamics in terms of the joint PDF evolution of the chiplet ensemble. Our results establish consistency of the model in a limiting sense, and demonstrate that the resulting PDF evolution can be seen as an infinite dimensional gradient descent of a Lyapunov-like energy functional w.r.t. the Wasserstein metric. %The latter variational interpretation should be of independent interest.

While we focused our development for the derivation of the controlled mean field model, our future work will investigate the synthesis of optimal control of the chiplet joint PDF w.r.t. suitable performance objective that allows steering an initial joint PDF to a desired terminal joint PDF. Such feedback steering problems are generalized variants of the so-called Schr\"{o}dinger bridge problem \cite{chen2016relation}. We note that the feedback synthesis for density steering subject to a controlled mean field nonlocal PDE is relatively less explored but has started appearing in recent works; see e.g., \cite{chen2021density,sinigaglia2022optimal,zheng2022backstepping}. 

%%%%%%%%%%%%%%%%%%%%%%%%%%%%%%%%%%%%%%%%%%
%\balance
\bibliographystyle{IEEEtran}
\bibliography{references.bib}

% Generated by IEEEtran.bst, version: 1.14 (2015/08/26)
\begin{thebibliography}{10}
\providecommand{\url}[1]{#1}
\csname url@samestyle\endcsname
\providecommand{\newblock}{\relax}
\providecommand{\bibinfo}[2]{#2}
\providecommand{\BIBentrySTDinterwordspacing}{\spaceskip=0pt\relax}
\providecommand{\BIBentryALTinterwordstretchfactor}{4}
\providecommand{\BIBentryALTinterwordspacing}{\spaceskip=\fontdimen2\font plus
\BIBentryALTinterwordstretchfactor\fontdimen3\font minus
  \fontdimen4\font\relax}
\providecommand{\BIBforeignlanguage}[2]{{%
\expandafter\ifx\csname l@#1\endcsname\relax
\typeout{** WARNING: IEEEtran.bst: No hyphenation pattern has been}%
\typeout{** loaded for the language `#1'. Using the pattern for}%
\typeout{** the default language instead.}%
\else
\language=\csname l@#1\endcsname
\fi
#2}}
\providecommand{\BIBdecl}{\relax}
\BIBdecl

\bibitem{matei2017towards}
I.~Matei, S.~Nelaturi, J.~P. Lu, J.~A. Bert, L.~S. Crawford, and E.~Chow,
  ``Towards printing as an electronics manufacturing method: Micro-scale
  chiplet position control,'' in \emph{2017 American Control Conference
  (ACC)}.\hskip 1em plus 0.5em minus 0.4em\relax IEEE, 2017, pp. 1549--1555.

\bibitem{matei2019micro}
I.~Matei, S.~Nelaturi, E.~M. Chow, J.~P. Lu, J.~A. Bert, and L.~S. Crawford,
  ``Micro-scale chiplets position control,'' \emph{Journal of
  Microelectromechanical Systems}, vol.~28, no.~4, pp. 643--655, 2019.

\bibitem{lu2014open}
J.~Lu, J.~Thompson, G.~Whiting, D.~Biegelsen, S.~Raychaudhuri, R.~Lujan,
  J.~Veres, L.~Lavery, A.~V{\"o}lkel, and E.~Chow, ``Open and closed loop
  manipulation of charged microchiplets in an electric field,'' \emph{Applied
  Physics Letters}, vol. 105, no.~5, p. 054104, 2014.

\bibitem{edwards2014controlling}
T.~D. Edwards and M.~A. Bevan, ``Controlling colloidal particles with electric
  fields,'' \emph{Langmuir}, vol.~30, no.~36, pp. 10\,793--10\,803, 2014.

\bibitem{matei2020micro}
I.~Matei, J.~de~Kleer, C.~Somarakis, A.~Plochowietz, and J.~S. Baras,
  ``Micro-scale 2d chiplet position control: a formal approach to policy
  design,'' in \emph{2020 59th IEEE Conference on Decision and Control
  (CDC)}.\hskip 1em plus 0.5em minus 0.4em\relax IEEE, 2020, pp. 5519--5524.

\bibitem{matei2021micro}
I.~Matei, A.~Plochowietz, J.~de~Kleer, and J.~S. Baras, ``Micro-scale chiplet
  assembly control with chiplet-to-chiplet potential interaction,'' in
  \emph{2021 60th IEEE Conference on Decision and Control (CDC)}.\hskip 1em
  plus 0.5em minus 0.4em\relax IEEE, 2021, pp. 623--628.

\bibitem{lefevre2022closed}
A.~Lefevre, V.~Gauthier, M.~Gauthier, and A.~Bolopion, ``Closed-loop control of
  particles based on dielectrophoretic actuation,'' \emph{IEEE/ASME
  Transactions on Mechatronics}, vol.~27, no.~6, pp. 4764--4773, 2022.

\bibitem{matei20222d}
I.~Matei, J.~de~Kleer, and M.~Zhenirovskyy, ``{2D} density control of
  micro-particles using kernel density estimation,'' \emph{arXiv preprint
  arXiv:2209.03550}, 2022.

\bibitem{matei2022system}
I.~Matei, A.~Plochowietz, S.~Nelaturi, J.~De~Kleer, J.~P. Lu, L.~S. Crawford,
  and E.~M. Chow, ``System and method for machine-learning enabled
  micro-assembly control with the aid of a digital computer,'' Jun.~16 2022,
  {US} Patent App. 17/121,411.

\bibitem{kantorovich1942translocation}
L.~V. Kantorovich, ``On the translocation of masses,'' in \emph{Dokl. Akad.
  Nauk. USSR (NS)}, vol.~37, 1942, pp. 199--201.

\bibitem{villani2003topics}
C.~Villani, \emph{Topics in Optimal Transportation}.\hskip 1em plus 0.5em minus
  0.4em\relax American Mathematical Soc., 2003, no.~58.

\bibitem{villani2009optimal}
C.~Villani, \emph{Optimal transport: old and new}.\hskip 1em plus 0.5em minus
  0.4em\relax Springer, 2009, vol. 338.

\bibitem{ambrosio2008gradient}
L.~Ambrosio, N.~Gigli, and G.~Savar{\'e}, \emph{Gradient flows: in metric
  spaces and in the space of probability measures}.\hskip 1em plus 0.5em minus
  0.4em\relax Springer Science \& Business Media, 2008.

\bibitem{dudley2002real}
R.~M. Dudley, \emph{Real analysis and probability}.\hskip 1em plus 0.5em minus
  0.4em\relax volume 74 of Cambridge Studies in Advanced Mathematics, Cambridge
  University Press, 2002.

\bibitem{4891957}
\BIBentryALTinterwordspacing
J.~P. Lu, J.~D. Thompson, G.~L. Whiting, D.~K. Biegelsen, S.~Raychaudhuri,
  R.~Lujan, J.~Veres, L.~L. Lavery, A.~R. Völkel, and E.~M. Chow, ``Open and
  closed loop manipulation of charged microchiplets in an electric field,''
  \emph{Applied Physics Letters}, vol. 105, no.~5, pp.
  054\,104--1--054\,104--4, 2014. [Online]. Available:
  \url{http://scitation.aip.org/content/aip/journal/apl/105/5/10.1063/1.4891957}
\BIBentrySTDinterwordspacing

\bibitem{comsol}
\BIBentryALTinterwordspacing
{COMSOL Multiphysics}\textsuperscript{\textregistered} v. 6.1., {COMSOL AB,
  Stockholm, Sweden}. [Online]. Available: \url{www.comsol.com}
\BIBentrySTDinterwordspacing

\bibitem{mckean1966class}
H.~P. McKean~Jr, ``A class of {M}arkov processes associated with nonlinear
  parabolic equations,'' \emph{Proceedings of the National Academy of
  Sciences}, vol.~56, no.~6, pp. 1907--1911, 1966.

\bibitem{sznitman1991topics}
A.-S. Sznitman, ``Topics in propagation of chaos,'' in \emph{Ecole
  d'{\'e}t{\'e} de probabilit{\'e}s de Saint-Flour XIX—1989}.\hskip 1em plus
  0.5em minus 0.4em\relax Springer, 1991, pp. 165--251.

\bibitem{carmona2018probabilistic}
R.~Carmona and F.~Delarue, \emph{Probabilistic theory of mean field games with
  applications I-II}.\hskip 1em plus 0.5em minus 0.4em\relax Springer, 2018.

\bibitem{kac1956foundations}
M.~Kac, ``Foundations of kinetic theory,'' in \emph{Proceedings of The third
  Berkeley symposium on mathematical statistics and probability}, vol.~3, 1956,
  pp. 171--197.

\bibitem{lacker}
D.~Lacker, ``On a strong form of propagation of chaos for {McKean-Vlasov}
  equations,'' \emph{Electronic Communications in Probability}, vol.~23,
  no.~45, pp. 1--11, 2018.

\bibitem{williams1991probability}
D.~Williams, \emph{Probability with martingales}.\hskip 1em plus 0.5em minus
  0.4em\relax Cambridge university press, 1991.

\bibitem{oelschlager1985law}
K.~Oelschl{\"a}ger, ``A law of large numbers for moderately interacting
  diffusion processes,'' \emph{Zeitschrift f{\"u}r Wahrscheinlichkeitstheorie
  und verwandte Gebiete}, vol.~69, no.~2, pp. 279--322, 1985.

\bibitem{billingsley2013convergence}
P.~Billingsley, \emph{Convergence of probability measures}.\hskip 1em plus
  0.5em minus 0.4em\relax John Wiley \& Sons, 2013.

\bibitem{pavliotis2014stochastic}
G.~A. Pavliotis, \emph{Stochastic processes and applications: diffusion
  processes, the Fokker-Planck and Langevin equations}.\hskip 1em plus 0.5em
  minus 0.4em\relax Springer, 2014, vol.~60.

\bibitem{bogachev2015fokker}
V.~I. Bogachev, N.~V. Krylov, M.~R{\"o}ckner, and S.~V. Shaposhnikov,
  \emph{Fokker-Planck-Kolmogorov Equations}.\hskip 1em plus 0.5em minus
  0.4em\relax American Mathematical Soc., 2015, vol. 207.

\bibitem{caluya2019gradient}
K.~F. Caluya and A.~Halder, ``Gradient flow algorithms for density propagation
  in stochastic systems,'' \emph{IEEE Transactions on Automatic Control},
  vol.~65, no.~10, pp. 3991--4004, 2019.

\bibitem{caluya2021wasserstein}
K.~F. Caluya and A.~Halder, ``Wasserstein proximal algorithms for the
  {S}chr{\"o}dinger bridge problem: Density control with nonlinear drift,''
  \emph{IEEE Transactions on Automatic Control}, vol.~67, no.~3, pp.
  1163--1178, 2021.

\bibitem{halder2017gradient}
A.~Halder and T.~T. Georgiou, ``Gradient flows in uncertainty propagation and
  filtering of linear {G}aussian systems,'' in \emph{2017 IEEE 56th Annual
  Conference on Decision and Control (CDC)}.\hskip 1em plus 0.5em minus
  0.4em\relax IEEE, 2017, pp. 3081--3088.

\bibitem{halder2018gradient}
A.~Halder and T.~T. Georgiou, ``Gradient flows in filtering and {Fisher-Rao}
  geometry,'' in \emph{2018 Annual American Control Conference (ACC)}.\hskip
  1em plus 0.5em minus 0.4em\relax IEEE, 2018, pp. 4281--4286.

\bibitem{halder2019proximal}
A.~Halder and T.~T. Georgiou, ``Proximal recursion for the {W}onham filter,''
  in \emph{2019 IEEE 58th Conference on Decision and Control (CDC)}.\hskip 1em
  plus 0.5em minus 0.4em\relax IEEE, 2019, pp. 660--665.

\bibitem{laborde201712}
M.~Laborde, ``On some nonlinear evolution systems which are perturbations of
  {Wasserstein} gradient flows,'' \emph{Topological Optimization and Optimal
  Transport: In the Applied Sciences}, vol.~17, pp. 304--332, 2017.

\bibitem{chen2016relation}
Y.~Chen, T.~T. Georgiou, and M.~Pavon, ``On the relation between optimal
  transport and {S}chr{\"o}dinger bridges: A stochastic control viewpoint,''
  \emph{Journal of Optimization Theory and Applications}, vol. 169, pp.
  671--691, 2016.

\bibitem{chen2021density}
Y.~Chen, ``Density control of interacting agent systems,'' \emph{arXiv preprint
  arXiv:2108.07342}, 2021.

\bibitem{sinigaglia2022optimal}
C.~Sinigaglia, F.~Braghin, and S.~Berman, ``Optimal control of velocity and
  nonlocal interactions in the mean-field {K}uramoto model,'' in \emph{2022
  American Control Conference (ACC)}.\hskip 1em plus 0.5em minus 0.4em\relax
  IEEE, 2022, pp. 290--295.

\bibitem{zheng2022backstepping}
T.~Zheng, Q.~Han, and H.~Lin, ``Backstepping mean-field density control for
  large-scale heterogeneous nonlinear stochastic systems,'' in \emph{2022
  American Control Conference (ACC)}.\hskip 1em plus 0.5em minus 0.4em\relax
  IEEE, 2022, pp. 4832--4837.

\end{thebibliography}

\end{document}